\theoremstyle{plain}
\newtheorem{theorem}{Theorem}
\newtheorem{lemma}[theorem]{Lemma}
\newtheorem*{conjecture}{Conjecture}
\theoremstyle{remark}
\let\leq\leqslant
\let\geq\geqslant
\let\setminus\smallsetminus
\g@addto@macro\bfseries{\boldmath}
\newcommand{\fif}[1][1]{\ensuremath{1/5}}
\newcommand{\aB}{\ensuremath{{\rm avoidBob}}}
\newcommand{\dist}{\ensuremath{{\rm dist}}}
 \title{Playing weighted Tron on Trees}
 \author{Daniel Hoske}
 \author{Jonathan Rollin}
 \author{Torsten Ueckerdt}
 \author{Stefan Walzer}
 \address{Karlsruhe Institute of Technology}
 \keywords{Games, Graphs, Tron}
 \date{\today}
\begin{document}

\maketitle

 \begin{abstract}
  We consider the weighted version of the \texttt{Tron} game on graphs where two players, Alice and Bob, each build their own path by claiming one vertex at a time, starting with Alice.
  The vertices carry non-negative weights that sum up to $1$ and either player tries to claim a path with larger total weight than the opponent.
  We show that if the graph is a tree then Alice can always ensure to get at most $\fif$ less than Bob, and that there exist trees where Bob can ensure to get at least $\fif$ more than Alice.
 \end{abstract}


\section{Introduction}

Mathematical games have been a frequent subject of study (see the $1700+$ articles recently collected by Fraenkel~\cite{F12}) for more than 60 years by now; not only because they are entertaining in nature, but also because of their relevance in practice, such as their close relation to diverse fields like biology, economics, and psychology.
Moreover, the analysis of mathematical games often reveals fundamental combinatorial structures and sometimes leads to the development of entire branches in mathematics and computer science.

In 1990 Bodlaender and Kloks~\cite{BK90} introduced the following impartial game with perfect information called \texttt{Tron}:
Given a graph, two players Alice and Bob choose distinct start vertices, first Alice then Bob.
Afterwards Alice and Bob take turns, each time claiming a new vertex adjacent to his/her previously claimed vertex and not claimed by anybody so far.
Thus either player builds a path with one end being its start vertex such that throughout the two paths are vertex disjoint.
A player loses when no vertex can extend the corresponding path.

\texttt{Tron} is inspired by the famous light cycle scene from the 1982 movie with the same name, which also gave rise to several computer games.
The combinatorial game \texttt{Tron} belongs to the class of \emph{subgraph building games}, where we essentially ask which of Alice and Bob can build a larger subgraph of a given graph according to certain building rules.
In the \emph{weighted} version, the game is played on \emph{instances} $(G,w)$, where $G$ is a graph and $w:V(G) \to \mathbb{R}_{\geq 0}$ are non-negative vertex weights, and one asks whether Alice or Bob can build the heavier subgraph, that is, one with larger total weight.
In this setting, it makes sense to allow either player to continue building his subgraph even if the other player already has no available move any more.
(Note that a player might win even by claiming significantly fewer vertices than the opponent.)
For convenience, instances are normalized (by scaling the weights) so that the total sum of weights in the graph equals~$1$, i.e., $\sum_{v \in V(G)}w(v) = 1$.
The \emph{outcome} of a game is the difference between the final weight of Bob's subgraph and Alice's subgraph.
So Bob tries to maximize the outcome, while Alice tries to minimize it.
The outcome of the game when both players play optimally is called the \emph{value} of that instance, denoted by $\Delta(G,w)$.
We are interested in the following extremal question for any fixed game:
\begin{center}
 \textit{For a given graph class $\mathcal{G}$, what is $\max_{G \in \mathcal{G}} \Delta(G,w)$ and $\min_{G \in \mathcal{G}} \Delta(G,w)$? I.e., what are the worst values for Alice and Bob, respectively?}
\end{center}

Concerning \texttt{Tron}, Miltzow~\cite{M12} constructed for every $n \in \mathbb{N}$ a graph $G_n$ on at least $n$ vertices in which Bob has a strategy to build a path on all but at most $8$ vertices, no matter how Alice plays.
Thus, $\Delta(G_n,w_n) \geq ((n-8) - 8)/n$, where $w_n$ assigns every vertex the uniform weight $1/|V(G_n)|$.
In particular, the value of such instances tends to $1$, so \texttt{Tron} can be arbitrarily bad for Alice on the set of all graphs, even in the unweighted case with uniform vertex weights.
On the other hand, if all but one vertex have a weight of zero, then such an instance is arbitrarily bad for Bob as its value is~$-1$.

In this paper we prove that the maximum value for \texttt{Tron} on the class of all trees equals $\fif$.
That is, playing \texttt{Tron} on a weighted tree, Alice can guarantee to claim a total weight of at most $\fif$ less than Bob.
And secondly, there is an instance in which Bob can guarantee to claim at least $\fif$ more than Alice.
Such an instance is given in Figure~\ref{fig:weighted-tree}.

\begin{theorem}\label{thm:main}
 Let $\mathcal{T}$ be the class of all trees. Then $\max_{T \in \mathcal{T}} \Delta(T,w) = \fif$.
\end{theorem}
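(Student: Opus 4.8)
The proof splits naturally into the two inequalities $\max_{T\in\mathcal T}\Delta(T,w)\le 1/5$ and $\max_{T\in\mathcal T}\Delta(T,w)\ge 1/5$. The lower bound is a single explicit construction: I would exhibit the tree in Figure~\ref{fig:weighted-tree} together with a strategy for Bob, and verify by exhaustive case analysis on Alice's possible first moves (and the finitely many meaningful responses thereafter) that Bob always finishes with weight at least $1/5$ more than Alice. Because a tree has no cycles, once both start vertices are fixed the ``interaction'' between the two paths is very constrained — each vertex removed by a player only cuts off a subtree — so this case analysis should be manageable; the art is in choosing the weights so that every branch Alice might take leaves Bob a response of value exactly $1/5$. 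I would present the weights symbolically (a few free parameters summing to $1$) and let the case analysis pin them down.

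The upper bound $\Delta(T,w)\le 1/5$ is the substantive half, and I expect it to occupy the bulk of the paper. Here I want a strategy for Alice on an arbitrary weighted tree $T$. The natural approach is a potential/charging argument together with a recursive decomposition of the tree. I would first reduce to a convenient normal form: root $T$ somewhere, and think of Alice's job after she has picked a start vertex as defending against Bob, who starts second; since Bob picks his start vertex knowing Alice's, Alice must choose a start vertex (presumably a heavy ``centroid-like'' vertex) so that \emph{whatever} component Bob is pushed into, the sub-instance is still good for her. The key structural fact to exploit is that when Alice sits at a vertex $v$, Bob's start vertex lies in one subtree hanging off $v$, and from then on the game is essentially played on that subtree with Alice entering it from $v$; so one wants an inductive statement of the form ``if Alice enters a rooted subtree at the root with the rest of the tree (of weight $W_{\text{out}}$) safely banked, she loses by at most $f(W_{\text{out}})$'' and then optimize. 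The constant $1/5$ should drop out of balancing the recursion, analogous to how $1/5$ arises in the lower-bound weights.

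**Main obstacle.** The hard part is designing Alice's strategy and the accompanying invariant so that the induction closes with the tight constant. A greedy ``always grab the heaviest available neighbor'' strategy is too weak; Alice needs to balance two goals — accumulating weight herself and, crucially, \emph{cutting Bob off} from heavy regions by occupying the right separating vertices — and these goals conflict, so the invariant has to track both Alice's current ``bank'', the weight Bob is currently cut off from, and the weight still ``in play'', with a linear combination of these serving as the potential. Getting the coefficients right (so that Alice always has a move that does not increase the potential, and the potential bounds the final deficit by $1/5$) is the crux; I would expect to first guess the three coefficients from the extremal example, then verify the single-move inequality by a short case distinction on whether Bob's last move threatened a heavy subtree or not. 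A secondary technical nuisance is handling the very first two moves (Alice's and Bob's start-vertex choices) separately from the ``steady-state'' recursion, and dealing with low-degree boundary cases (leaves, paths) where a player runs out of moves early.
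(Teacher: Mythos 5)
There is a genuine gap: your proposal is a plan of attack, not a proof, and the part you yourself flag as the ``main obstacle'' --- actually designing Alice's strategy and the invariant whose coefficients make the induction close at exactly $1/5$ --- is precisely the mathematical content of the upper bound, and it is left entirely undone. You give no candidate potential function, no coefficients, no single-move inequality, and no argument for why a centroid-like start vertex plus a local invariant should survive the boundary effects you mention (first two moves, players running out of moves). It is far from clear that such a local/inductive scheme can be made to work at all: Alice's best start vertex depends globally on Bob's response map, and her gain comes not from any single path she builds but from the fact that \emph{one of several} candidate strategies must succeed. Likewise the lower bound is only promised (``I would exhibit\dots and verify''), with neither the weights nor Bob's strategy written down, though that half is at least a routine finite check once the example is fixed.

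For comparison, the paper does not argue by induction or a potential at all. It first finds an edge $a_{\ell}a_r$ such that Bob's reply to a start at either endpoint lies on the opposite side (this exists because a tree has fewer edges than vertices), then partitions each side using a heaviest path from $a_{\ell}$ (resp.\ $a_r$) and a heaviest path of that side, yielding pieces $X,Y,Z,R$ with weights satisfying $x\le z\le y$ on each side. Each ``avoid Bob'' strategy $\aB(u,v)$ for Alice --- walk from $u$ to $v$, then take a heaviest path avoiding Bob --- produces a linear inequality in these eight weights bounding $\Delta$, and the theorem follows by taking suitable nonnegative combinations of these inequalities (e.g.\ $2\cdot\eqref{eq:aB-a1}+2\cdot\eqref{eq:aB-a2}$ already gives $\Delta\le 1/4$, and finer combinations involving an auxiliary start vertex $e_{\ell}$ chosen via the quantity $\alpha_{\ell}$ give $1/5$). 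So the conclusion is that ``some strategy in the family achieves $\le 1/5$'' rather than a single explicit strategy with an invariant. If you want to pursue your route, you would need to either produce the invariant explicitly and verify it move by move, or recognize that the averaging-over-strategies mechanism (as in the pizza-game literature the paper cites) is doing work that a single-strategy potential argument does not obviously replicate.
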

    
\begin{figure}[htb]
 \centering
 \includegraphics{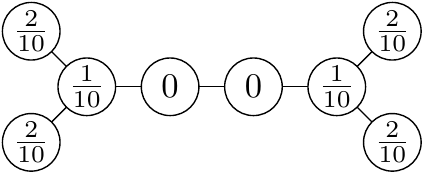}
 \caption{A weighted tree with value $\Delta = \fif$. Note that any starting position for Alice gives the same outcome, provided both play optimally.}
 \label{fig:weighted-tree}
\end{figure}

We shall prove Theorem~\ref{thm:main} in Section~\ref{sec:proof} below and conclude with two open problems in Section~\ref{sec:conclusions}.

\subsection*{Related work}

A central question in combinatorial game theory is to determine the computational complexity of deciding which player has a winning strategy, i.e., whether for a given instance $(G,w)$ we have $\Delta(G,w) \leq 0$ or $\Delta(G,w) \geq 0$.
For \texttt{Tron}, Bodlaender and Kloks~\cite{BK90} present a polynomial time algorithm when the graph is a tree, whereas Miltzow~\cite{M12} proves PSPACE-hardness for general graphs.

Related path building games include \texttt{Geography} where both players build the same path, always extending at the same end.
This variant, introduced by Fraenkel and Simonson in 1993~\cite{FS93}, resembles the children's game in which two players alternatingly have to find a country whose first letter matches the last letter of the previously country and that has not been taken yet.
For a summary of such path building games and their computational complexity we refer to the survey of Bodlaender~\cite{B93}.

In 2008 Peter Winkler stated the so-called \texttt{Pizza Game} in which the given graph is a vertex weighted cycle and both players build the same path which in each step may be extended on either end.
Again, if all the weight lies on one vertex, this is arbitrarily bad for Bob.
But Winkler asked whether Alice can always ensure $4/9$ of the total weight in the graph no matter how it is distributed, i.e., whether $\Delta(G,w) \geq 1/9$ whenever $G$ is a cycle.
In 2009 this has been confirmed independently by two sets of authors~\cite{CKMVS09,KMU11}.

For \texttt{Graph Sharing Games}, which are natural generalizations of the \texttt{Pizza Game}, it is known that there exist for every $k \geq 1$ a sequence of $k$-connected graphs whose values tend to $1$~\cite{CKMSV13}.
For trees, Micek and Walczak observe a parity phenomenon, namely that for \texttt{Graph Sharing Games} the minimum and maximum value for trees with even and odd number of vertices differ significantly~\cite{MW11,MW12}.
Let us remark that one variant of the \texttt{Graph Sharing Game} for trees has become well-known under the name \texttt{Gold Grabbing Game}~\cite{SS12}.

\section{Proof of Theorem~\ref{thm:main}}\label{sec:proof}

An \emph{instance} of \texttt{Tron} is a pair $(T,w)$ of a tree $T = (V,E)$ and non-negative real vertex weights $w:V \to [0,1]$ with $\sum_{v \in V} w(v) = 1$.
For a subset $U \subseteq V$ of vertices we denote $w(U) = \sum_{u \in U} w(u)$.
For a vertex $u \in V$, let $A_u$ and $B_u$ be the subsets of vertices that Alice and Bob take when Alice starts with $u$, and from then on Alice and Bob play optimally subject to minimizing $w(B_u)-w(A_u)$ and maximizing $w(B_u)-w(A_u)$, respectively.
Thus, $A_u \cap B_u = \emptyset$ and $w(A_u) + w(B_u) \leq 1$ for all $u\in V$.
The \emph{value of an instance $(T,w)$} is the difference $\Delta(T,w)$ in the final total weights of Bob and Alice, when both players play optimally.
In particular, $\Delta(T,w) = \min_{u \in V(G)} (w(B_u) - w(A_u))$.

For the remainder let $(T,w)$ be any fixed instance of \texttt{Tron}.
To prove Theorem~\ref{thm:main} we have to show that $\Delta = \Delta(T,w) \leq \fif$.
In Subsection~\ref{sub:partition} we partition $T$ into several (possibly empty) paths satisfying certain properties.
In Subsection~\ref{sub:avoid-Bob-strategies} we define the strategies for Alice, one of which will eventually ensure that she gets at most $\fif$ less than Bob.

\medskip

We start with the weighted version of a well-known fact about longest paths in trees, whose easy proof we omit here.
A \emph{heaviest path} is one with maximum total sum of weights.

\begin{lemma}\label{lem:heavy-path}
 If $v_0$ is any vertex in a tree $T$ with non-negative vertex weights and $P = (v_0,\ldots,v_k)$ is a heaviest among all paths starting in $v_0$, then there is a heaviest path of $T$ that starts in $v_k$.
\end{lemma}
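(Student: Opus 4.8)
The plan is to fix, once and for all, a heaviest path $Q=(u_0,\ldots,u_m)$ of $T$, and then to exhibit a path that starts in $v_k$ and has weight at least $w(Q)$; since $Q$ is heaviest, such a path is automatically a heaviest path of $T$, which is exactly what is claimed. Two facts will be used repeatedly: if two paths in a tree meet in exactly one vertex $x$, then the weight of their union equals the sum of their weights minus $w(x)$; and the intersection of two paths in a tree is again a path (possibly empty). The argument splits according to whether $P$ and $Q$ share a vertex.

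Suppose first that $P$ and $Q$ have a common vertex, so that $P\cap Q$ is a nonempty subpath of $P$. Let $v_j$ be its endpoint with the larger index (the vertex of $P\cap Q$ closest to $v_k$ along $P$), and write $C=(v_j,\ldots,v_k)$ for the corresponding tail of $P$. The vertex $v_j$ cuts $Q$ into two subpaths; let $Q_{\mathrm{in}}$ be the one containing all of $P\cap Q$ and $Q_{\mathrm{out}}$ the other, so that $Q_{\mathrm{in}}$ and $Q_{\mathrm{out}}$ share exactly the vertex $v_j$ and $Q_{\mathrm{out}}$ meets $P$ only in $v_j$. The key observation is that $(v_0,\ldots,v_j)\cup Q_{\mathrm{out}}$ is a path starting in $v_0$: comparing its weight with $w(P)=w\bigl((v_0,\ldots,v_j)\cup C\bigr)$ and using that $P$ is a heaviest path starting in $v_0$ yields $w(C)\ge w(Q_{\mathrm{out}})$. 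Then $\Pi:=(v_k,\ldots,v_j)\cup Q_{\mathrm{in}}$ is a path starting in $v_k$, and
$$w(\Pi)=w(C)+w(Q_{\mathrm{in}})-w(v_j)\ge w(Q_{\mathrm{out}})+w(Q_{\mathrm{in}})-w(v_j)=w(Q),$$
as desired.

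Now suppose $P$ and $Q$ are vertex-disjoint. Let $W$ be the unique path of $T$ joining them, running from a vertex $x\in V(P)$ to a vertex $y\in V(Q)$ and internally disjoint from both; since $x\neq y$ (as $V(P)\cap V(Q)=\emptyset$) and all weights are nonnegative, $w(W)\ge w(x)+w(y)$. Write $P_2=(x,\ldots,v_k)$ for the part of $P$ from $x$ to $v_k$, and let $y$ cut $Q$ into subpaths $Q_1$ and $Q_2$. For each $i\in\{1,2\}$ the concatenation $(v_0,\ldots,x)\cup W\cup Q_i$ is a path starting in $v_0$; comparing its weight with $w(P)$ and using that $P$ is heaviest among paths from $v_0$ gives $w(P_2)\ge w(Q_i)$. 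Choosing $Q_i$ with $w(Q_i)\ge\tfrac12\bigl(w(Q)+w(y)\bigr)$ (possible since $w(Q_1)+w(Q_2)=w(Q)+w(y)$), the path $\Pi:=(v_k,\ldots,x)\cup W\cup Q_i$ starts in $v_k$ and satisfies
$$w(\Pi)=w(P_2)+w(Q_i)+\bigl(w(W)-w(x)-w(y)\bigr)\ge w(P_2)+w(Q_i)\ge w(Q)+w(y)\ge w(Q).$$

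The mathematics here is light — this is the weighted analogue of the classical fact that a farthest vertex from any vertex is an endpoint of a diameter — so the main obstacle is purely organizational: picking the correct vertex of $Q$ at which to cut (the endpoint of $P\cap Q$ nearest $v_k$, respectively the attachment vertex $y$) and keeping careful track of the single-vertex overlaps when gluing subpaths. One should also confirm that the inequalities survive the degenerate configurations — $v_0\in V(Q)$, $v_k\in V(Q)$, $P\subseteq Q$, or $P\cap Q$ a single vertex — but in each of these the displayed computations go through verbatim.
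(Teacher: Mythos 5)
Your proof is correct, and there is nothing in the paper to compare it against: the authors explicitly omit the proof of Lemma~\ref{lem:heavy-path} as an ``easy'' weighted analogue of the classical fact about longest paths. Your exchange argument --- comparing, in both the intersecting and the disjoint case, the tail $C$ resp.\ $P_2$ of $P$ against the appropriate piece of a fixed heaviest path $Q$, using that $P$ is heaviest among paths from $v_0$, and then regluing along the single cut vertex ($v_j$, resp.\ $x$, $y$, with the connector $W$) --- is exactly the standard route one would take, and the degenerate configurations you list indeed cause no trouble since the single-vertex overlaps are accounted for in the weight bookkeeping.
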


\subsection{Partitioning the tree}\label{sub:partition}

For every vertex $u \in V$ let $B(u)$ be the vertex that Bob takes first when Alice starts with $u$ and Bob plays optimally.
I.e., the vertices in $B_u$ form a path starting with $B(u)$.
As $|E| < |V|$ there exists an edge $a_{\ell}a_r$ such that $B(a_{\ell})$ lies in the subtree rooted at $a_{\ell}$ containing $a_r$ and $B(a_r)$ lies in the subtree rooted at $a_r$ containing $a_{\ell}$.
The edge $a_{\ell}a_r$ splits the tree into two subtrees, which we call the \emph{left} and the \emph{right} \emph{side}, where $a_{\ell}$ and $a_r$ are part of the left and right side, respectively.

Let $P_{\ell} = (a_{\ell},\dots,b_{\ell})$ be a heaviest path in the left side starting at $a_{\ell}$.
By Lemma~\ref{lem:heavy-path} there is a heaviest path $Q_{\ell}$ in the left side starting in $b_{\ell}$.
It consists of an initial subpath~$Y_{\ell} \subseteq P_{\ell}$ and a second subpath $Z_{\ell}$ that is disjoint from $P_{\ell}$ (and possibly empty), where we identify paths in $T$ with their corresponding vertex sets.

We denote the endpoint of $Y_{\ell}$ different from $b_{\ell}$ by $d_{\ell}$ and the path $P_{\ell} \setminus Y_{\ell}$ by $X_{\ell}$.
The endpoint of $Q_{\ell}$ different from $b_{\ell}$ is called $c_{\ell}$.
See Figure~\ref{fig:tree-partition} for an illustration.
Lastly we define $R_{\ell}$ to be the set of those vertices of the left side not in $X_{\ell} \cup Y_{\ell} \cup Z_{\ell}$.
Symmetrically, we define $X_r, Y_r, Z_r, R_r, b_r, c_r, d_r$ for the right side.

\begin{figure}[htb]
 \centering
 \includegraphics{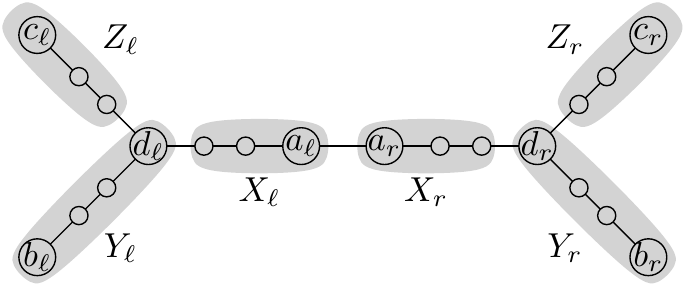}
 \caption{Left and right side are separated by the edge $a_{\ell}a_r$. The path $X_{\ell}\cup Y_{\ell}$ is a heaviest one in the left side starting with $a_{\ell}$. The path $Y_{\ell} \cup Z_{\ell}$ is a heaviest in the left side. And symmetrically we have $X_r,Y_r$ and $Z_r$ in the right side.}
 \label{fig:tree-partition}
\end{figure}

For convenience, we denote the weights of $X_{\ell}$, $Y_{\ell}$, $Z_{\ell}$, $R_{\ell}$, $X_r$, $Y_r$, $Z_r$, $R_r$ by $x_{\ell}$, $y_{\ell}$, $z_{\ell}$, $r_{\ell}$, $x_r$, $y_r$, $z_r$, $r_r$, respectively.
E.g., $x_r = w(X_r)$.
Then we have
\begin{equation}
 (x_{\ell} + y_{\ell} + z_{\ell} + r_{\ell}) + (x_r + y_r + z_r + r_r) = 1\label{eq:sumIsOne}
\end{equation}
as well as
\begin{equation}
 x_{\ell} \leq z_{\ell} \leq y_{\ell} \quad \text{and} \quad x_r \leq z_r \leq y_r.\label{eq:z-leq-y}
\end{equation}

\subsection{Avoid Bob Strategies}\label{sub:avoid-Bob-strategies}

Next we define the strategies for Alice that we use to prove Theorem~\ref{thm:main}.
For any two (not necessarily distinct) vertices $u,v$ the strategy \emph{avoid Bob after the path from $u$ to $v$}, denoted by $\aB(u,v)$, is defined as follows.
\begin{enumerate}[label = (\roman*)]
 \item Start at $u$ and walk towards $v$.
 \item When at $v$, proceed with a heaviest path from $v$ that avoids the subtrees of $T-v$ containing $u$ and $B(u)$.
\end{enumerate}
In case $u = v$ we also write $\aB(u)$ instead of $\aB(u,u)$.

Suppose Alice plays $\aB(a_{\ell})$.
Then she gets a total weight of $x_{\ell} + y_{\ell}$, since Bob answers in the right side.
In the worst case (for Alice) Bob gets a heaviest path in the right side, i.e., a total weight of at most $y_r + z_r$.
From $\Delta \leq w(B_{a_{\ell}}) - w(A_{a_{\ell}})$ and the symmetrical analysis of $\aB(a_r)$ we conclude that
\begin{align}
 \Delta &\leq (y_r + z_r) - (x_{\ell} + y_{\ell})\label{eq:aB-a1}\\
 \text{and } \quad \Delta &\leq (y_{\ell} + z_{\ell}) - (x_r + y_r).\label{eq:aB-a2}
\end{align}

In the remainder we shall consider certain avoid Bob strategies, derive inequalities like~\eqref{eq:aB-a1},~\eqref{eq:aB-a2} and combine these into an upper bound for $\Delta = \Delta(T,w)$, eventually proving Theorem~\ref{thm:main}, i.e., $\Delta \leq \fif$.
Note that when we derive from a set $S$ of strategies a set of inequalities which can then be combined to give $\Delta \leq \fif$, then this means that with at least one of the strategies in $S$ Alice's outcome is at most $\fif$.
So rather than defining a specific strategy for Alice ensuring $\Delta \leq \fif$ we give a set of avoid Bob strategies and prove that one of these ensures $\Delta \leq \fif$.
Similar approaches has been successfully used before~\cite{KMU11}.

As an example of this proof technique, let us sum $2$ times the inequalities~\eqref{eq:aB-a1} and~\eqref{eq:aB-a2}.
We obtain
\begin{equation*}
 4 \Delta \leq 2(z_r + z_{\ell} - x_r - x_{\ell}) \overset{x_{\ell},x_r \geq 0}{\leq} 2 z_r + 2 z_{\ell} \overset{\eqref{eq:z-leq-y}}{\leq} y_r + z_r + y_{\ell} + z_{\ell} \overset{\eqref{eq:sumIsOne}}{\leq} 1,
\end{equation*}
which implies $\Delta \leq 1/4$; a first upper bound for $\Delta$.
In other words, this weighted averaging argument shows that with one of $\aB(a_{\ell})$ and $\aB(a_r)$ Alice gets at most $1/4$ less than Bob.

We remark that this is tight if we consider $a_{\ell}$ and $a_r$ as possible starting positions for Alice only.
For example, playing the instance in Figure~\ref{fig:trickytree}, Bob can guarantee to get at least $1/4$ more than Alice, if she starts in $a_{\ell}$ or $a_r$.
(In general the edge $a_{\ell}a_r$ might not be unique, though in the example it is.)
    
\begin{figure}[htb]
 \centering
 \includegraphics{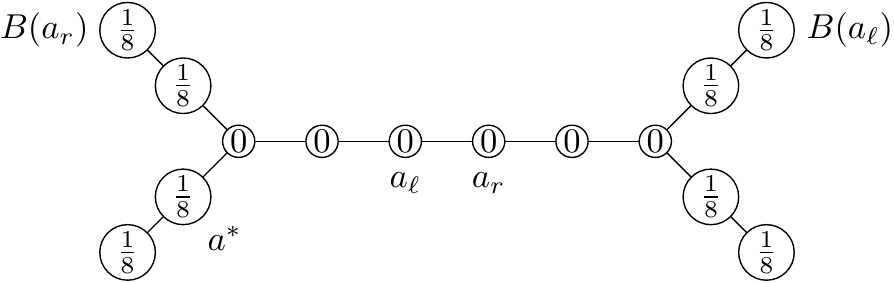}
 \caption{Both $a_{\ell}$ and $a_r$ are suboptimal starting positions for Alice. An optimal strategy starts for example at $a^*$.}
 \label{fig:trickytree}
\end{figure}

Of course, Alice can obey the avoid Bob strategy $\aB(u,v)$ only if she reaches the vertex $v$ before Bob.
We say that $\aB(u,v)$ is \emph{applicable} if $\dist(u,v) \leq \dist(B(u),v)$, where $\dist(u,v)$ denotes the (unweighted) distance between $u$ and $v$ in the tree.
We will ensure to use $\aB(u,v)$ only when it is applicable.
    
\begin{lemma}\label{lem:distances}
 If $\aB(a_r,d_{\ell})$ is applicable, then $\Delta \leq \fif$.
\end{lemma}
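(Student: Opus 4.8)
The plan is to base the argument on the single strategy $\aB(a_r,d_{\ell})$, to show that it yields the inequality $\Delta\le y_{\ell}+r_{\ell}-x_{\ell}-z_{\ell}$, and then to combine this with \eqref{eq:aB-a1} and \eqref{eq:aB-a2} by a weighted averaging argument of the kind used for the preliminary bound $\Delta\le1/4$.

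First I would use applicability to get Alice safely to $d_{\ell}$. If $p$ is any vertex on the $a_r$–$d_{\ell}$ path then, by the triangle inequality and $\dist(a_r,d_{\ell})\le\dist(B(a_r),d_{\ell})$, we have $\dist(B(a_r),p)\ge\dist(B(a_r),d_{\ell})-\dist(p,d_{\ell})\ge\dist(a_r,p)$; since Alice moves first in each round, she claims $p$ no later than the turn on which Bob first could reach it, so she reaches $d_{\ell}$ uncontested. The same count shows that $B(a_r)$ is neither $a_{\ell}$ nor an interior vertex of $X_{\ell}$, so, recalling that by the choice of $a_{\ell}a_r$ the vertex $B(a_r)$ lies in the left side, $B(a_r)$ sits strictly off the corridor $\{a_r,a_{\ell}\}\cup X_{\ell}\cup\{d_{\ell}\}$, i.e.\ inside one of the branches hanging at $d_{\ell}$ or inside an $R_{\ell}$-subtree.

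Next I would analyse the position at $d_{\ell}$. The components of $T-d_{\ell}$ are: $S_{a_r}$, containing $a_{\ell}$, $X_{\ell}$ and the entire right side; $S_b$, containing $Y_{\ell}\setminus\{d_{\ell}\}$ (and $b_{\ell}$); $S_c$, containing $Z_{\ell}$ (and $c_{\ell}$); and possibly further subtrees consisting only of vertices of $R_{\ell}$. Because $P_{\ell}=X_{\ell}\cup Y_{\ell}$ and $Q_{\ell}=Y_{\ell}\cup Z_{\ell}$ are heaviest paths, the heaviest path from $d_{\ell}$ into $S_b$ has weight $y_{\ell}$ and dominates the one into $S_c$ (weight $w(d_{\ell})+z_{\ell}$) and those into the $R_{\ell}$-subtrees; in particular this reproves $z_{\ell}\le y_{\ell}$. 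Following $\aB(a_r,d_{\ell})$, Alice leaves $d_{\ell}$ along a heaviest path avoiding $S_{a_r}$ and the component of $B(a_r)$: she takes the $Y_{\ell}$-branch and ends with total weight $w(a_r)+x_{\ell}+y_{\ell}$, unless $B(a_r)\in S_b$, in which case she takes the $Z_{\ell}$-branch and ends with $w(a_r)+x_{\ell}+w(d_{\ell})+z_{\ell}$. Bob's path is connected and disjoint from Alice's corridor, hence confined to a single component of $T-d_{\ell}$; checking the possibilities (an $R_{\ell}$-subtree, weight $\le r_{\ell}$; $S_c$, weight $\le z_{\ell}+r_{\ell}$; the part of $S_{a_r}$ open to Bob, weight at most the heaviest right-side path $y_r+z_r$; or $S_b$, weight $\le y_{\ell}-w(d_{\ell})+r_{\ell}$) and pairing each with Alice's gain, while using $z_{\ell}\le y_{\ell}$, $w(d_{\ell}),x_{\ell},w(a_r)\ge0$ and the defining property that $B(a_r)$ lies on the left (so that Bob does not beat the bound by diving into the right side), one should obtain in every case
\[
 \Delta\;\le\;w(B_{a_r})-w(A_{a_r})\;\le\;y_{\ell}+r_{\ell}-x_{\ell}-z_{\ell},
\]
the decisive case being Bob starting inside $S_b$.

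Finally I would combine this with \eqref{eq:aB-a1}, \eqref{eq:aB-a2}, \eqref{eq:sumIsOne} and \eqref{eq:z-leq-y}: form $\tfrac15\bigl(y_{\ell}+r_{\ell}-x_{\ell}-z_{\ell}\bigr)+\tfrac25\bigl(y_r+z_r-x_{\ell}-y_{\ell}\bigr)+\tfrac25\bigl(y_{\ell}+z_{\ell}-x_r-y_r\bigr)$ and substitute $r_{\ell}=1-x_{\ell}-y_{\ell}-z_{\ell}-x_r-y_r-z_r-r_r$ from \eqref{eq:sumIsOne}; the coefficients of $y_{\ell}$, $z_{\ell}$ and $y_r$ all cancel, leaving
\[
 \Delta\;\le\;\tfrac15-\tfrac45 x_{\ell}-\tfrac35 x_r-\tfrac15\bigl(y_r-z_r\bigr)-\tfrac15 r_r\;\le\;\tfrac15,
\]
the last inequality by $z_r\le y_r$ from \eqref{eq:z-leq-y} together with non-negativity of all weights, which gives $\Delta\le\fif$. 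The step I expect to be the main obstacle is establishing the displayed inequality $\Delta\le y_{\ell}+r_{\ell}-x_{\ell}-z_{\ell}$ uniformly over Bob's replies: one must verify that no response of Bob — diving into the right side, seizing a heavy $R_{\ell}$-branch, or crowding $d_{\ell}$ so as to threaten Alice's walk — produces an outcome exceeding $y_{\ell}+r_{\ell}-x_{\ell}-z_{\ell}$, and the right-side subcase is precisely where applicability and the choice of the edge $a_{\ell}a_r$ (forcing $B(a_r)$ to the left, so that the right side cannot be too valuable to Bob) must be used.
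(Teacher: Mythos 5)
Your route is essentially the paper's own: the single new inequality you extract from $\aB(a_r,d_\ell)$, namely $\Delta\le y_\ell+r_\ell-x_\ell-z_\ell$, is exactly the paper's bound $\Delta\le 1-(x_r+y_r+z_r+r_r+x_\ell+z_\ell)-(x_\ell+z_\ell)$ rewritten via \eqref{eq:sumIsOne} (Alice secures $x_\ell+z_\ell$; Bob is locked into the left side and misses that much of it), and your $\tfrac15:\tfrac25:\tfrac25$ averaging with \eqref{eq:aB-a1} and \eqref{eq:aB-a2} is precisely the paper's $1:2:2$ combination giving $5\Delta\le 1-(y_r-z_r)\le 1$. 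Your preparatory observations (Alice reaches every corridor vertex strictly before Bob could, $B(a_r)$ lies off the corridor, the heaviest-path comparisons at $d_\ell$) are correct and only spell out what the paper leaves implicit.

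The one step that, as written, would fail is the subcase you yourself flag as the obstacle: for Bob starting in the component $S_{a_r}$ of $T-d_\ell$ you record the bound ``at most the heaviest right-side path $y_r+z_r$'', and with that bound the target inequality $\Delta\le y_\ell+r_\ell-x_\ell-z_\ell$ (hence your final averaging) does not follow — indeed $y_r+z_r-(x_\ell+z_\ell)$ combined $1:2:2$ with \eqref{eq:aB-a1},\eqref{eq:aB-a2} gives only $5\Delta\le y_r+3z_r+z_\ell+\dots$, which can exceed $1$. Fortunately this subcase is the easiest one, and you already have every ingredient needed to close it: by the choice of the edge $a_\ell a_r$ the vertex $B(a_r)$ lies in the left side, by applicability it lies strictly off the corridor from $a_r$ through $a_\ell$ and $X_\ell$ to $d_\ell$, and since Alice claims the whole corridor before Bob can touch it, Bob's path stays inside the single component of $T$ minus the corridor containing $B(a_r)$. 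If that component is attached to $a_\ell$ or to a vertex of $X_\ell$, it is an $R_\ell$-subtree, so Bob gets at most $r_\ell$; the right side is never accessible to him and $y_r+z_r$ plays no role anywhere in this lemma. With that correction every case indeed yields $\Delta\le y_\ell+r_\ell-x_\ell-z_\ell$, and your argument coincides with the paper's proof.
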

\begin{proof}
 We show that with one of $\aB(a_r,d_{\ell})$, $\aB(a_{\ell})$, $\aB(a_r)$ Alice gets at most $\fif$ less than Bob.
 
 When Alice plays $\aB(a_r,d_{\ell})$ and reaches $d_{\ell}$, $B(a_r)$ can not be in $Y_{\ell}$ and in $Z_{\ell}$ at once.
 So Alice gets at least the weight of the lighter one, which by~\eqref{eq:z-leq-y} is $z_{\ell}$.
 On the other hand, Bob gets at most $1 - (x_r + y_r + z_r + r_r + x_{\ell} + z_{\ell})$, since the right side is inaccessible to him and Alice takes at least $x_{\ell}$ and $z_{\ell}$ from the left side.
 We conclude that
 \begin{equation*}
  \Delta \leq 1 - (x_r + y_r + z_r + r_r + x_{\ell} + z_{\ell}) - (x_{\ell} + z_{\ell}) \leq 1 - y_r - z_r - 2z_{\ell}.
 \end{equation*}
 Adding twice the inequalities~\eqref{eq:aB-a1} and~\eqref{eq:aB-a2} (here we use the strategies $\aB(a_{\ell})$ and $\aB(a_r)$) we get
 \begin{align*}
  5\Delta \leq 1 - y_r - z_r - 2z_{\ell} + 2(z_r + z_{\ell}) = 1 - (y_r - z_r) \overset{\eqref{eq:z-leq-y}}{\leq} 1,
 \end{align*}
 which implies $\Delta \leq \fif$, as desired.
\end{proof}

By symmetry, Lemma~\ref{lem:distances} and its proof still hold if we switch all occurrences of $\ell$ and $r$ in the subscripts, i.e., if $\aB(a_{\ell},d_r)$ is applicable, then also $\Delta \leq \fif$.
For simplicity, we formulate and prove this and all subsequent lemmas only with respect to the left side of the tree, even though we also need their ``dual versions'' in the final proof of Theorem~\ref{thm:main}.

In some trees, like the one in Figure~\ref{fig:trickytree}, both strategies $\aB(a_{\ell},d_r)$ and $\aB(a_{\ell},d_r)$ are not applicable.
Moreover both $a_{\ell}$ and $a_r$ are suboptimal starting positions for Alice.
So we identify another starting position for Alice next.
Recall that $Q_{\ell} = Y_{\ell} \cup Z_{\ell}$ is a path from $b_{\ell}$ through $d_{\ell}$ to $c_{\ell}$ (see Figure~\ref{fig:tree-partition}).
We consider the real number \begin{equation}
 \alpha_{\ell} := \frac{1}{3}(r_{\ell} + 2y_{\ell} + z_{\ell} + x_{\ell} + x_r - z_r).\label{eq:alpha}
\end{equation}

In case $\alpha_{\ell} < z_{\ell} - x_{\ell}$, let $e_{\ell} \in Q_{\ell}$ be the vertex for which the paths from $e_{\ell}$ to $b_{\ell}$ and $c_{\ell}$ have weight at least $q_{\ell} - \alpha_{\ell}$ and $\alpha_{\ell}$, respectively, or vice versa.
Subject to that, $e_{\ell}$ shall be as close as possible to $d_{\ell}$.
We refer to Figure~\ref{fig:new-start-position} for an illustration.
Since $\alpha_{\ell} \leq z_{\ell} \leq y_{\ell}$, it is the path from $e_{\ell}$ to $b_{\ell}$ or $c_{\ell}$ that contains $d_{\ell}$ which has weight at least $q_{\ell} - \alpha_{\ell}$, while the other path \emph{not} containing $d_{\ell}$ has weight at least $\alpha_{\ell}$.
    
\begin{figure}[htb]
 \centering
 \includegraphics{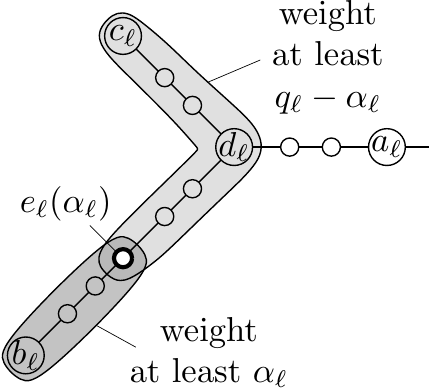}
 \caption{The path $Q_{\ell}$ connects $b_{\ell}$ and $c_{\ell}$. Consider the (essentially two) points such that going from them in one direction along $Q_{\ell}$ yields a weight of at least $\alpha_{\ell}$, while going in the other direction yields at least $q_{\ell} - \alpha_{\ell}$. Such a point closest to $d_{\ell}$ is $e_{\ell}$.}
 \label{fig:new-start-position}
\end{figure}

Note that $3\alpha_{\ell} \geq y_{\ell} + z_{\ell} - z_r \overset{\eqref{eq:z-leq-y}}{\geq} (y_{\ell} + z_{\ell}) - (y_r + x_r) \overset{\eqref{eq:aB-a2}}{\geq} \Delta$, which implies $\alpha_{\ell} \geq 0$ (unless $\Delta < 0$, in which case we are done).

\begin{lemma}\label{lem:distances-2}
 If $\alpha_{\ell} < z_{\ell} - x_{\ell}$ and $\dist(a_r,d_{\ell}) \leq \dist(e_{\ell},d_{\ell})$, then $\Delta \leq \fif$.
\end{lemma}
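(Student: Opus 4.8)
The plan is to stay in the ``one of a short list of avoid-Bob strategies always works'' scheme behind \eqref{eq:aB-a1}--\eqref{eq:aB-a2}, and to add strategies in which Alice \emph{starts} at the balancing vertex $e_\ell$. (Letting Alice walk $\aB(a_r,e_\ell)$ instead does not help: that walk is in general no longer applicable once $\aB(a_r,d_\ell)$ fails to be.) By construction $e_\ell$ cuts the heaviest path $Q_\ell$ into two sub-paths meeting at $e_\ell$, one through $d_\ell$ of weight at least $q_\ell-\alpha_\ell$ and one avoiding $d_\ell$ of weight at least $\alpha_\ell$ (this uses $\alpha_\ell<z_\ell-x_\ell$, $\alpha_\ell\ge 0$ and \eqref{eq:z-leq-y}); Alice will race Bob for one of these, or for an extension of the first through $X_\ell$ into the right side. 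The hypothesis $\dist(a_r,d_\ell)\le\dist(e_\ell,d_\ell)$ will be used only to certify applicability of such walks.

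I would then play out the game with Alice starting at $e_\ell$ and case on where Bob's first vertex lands in $T-e_\ell$. If it lands on the side of $e_\ell$ away from $d_\ell$ (a subtree of $R_\ell\cup Z_\ell$ whose total weight is at most $r_\ell+\alpha_\ell$ by the split above together with \eqref{eq:sumIsOne}), or inside some side-tree of $R_\ell$ at $e_\ell$, then Alice walks from $e_\ell$ through $d_\ell$ and continues with a heaviest $d_\ell$-side path --- along $Y_\ell$, or along $X_\ell$ and then the heaviest right-side path from $a_r$ --- collecting roughly $(z_\ell-\alpha_\ell)+\max\{y_\ell,\;x_\ell+x_r+y_r\}$ while Bob is sealed off at $e_\ell$ and confined; this yields $\Delta\le r_\ell+2\alpha_\ell-y_\ell-z_\ell$ (or a stronger inequality if the right-side branch dominates). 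If Bob's first vertex lands on the side of $e_\ell$ towards $d_\ell$, Alice instead secures the sub-path of $Q_\ell$ at $e_\ell$ avoiding $d_\ell$ (weight at least $\alpha_\ell$) and keeps Bob confined to that side; the distance hypothesis is what makes the competing avoid-Bob walk from $e_\ell$ through $d_\ell$ onward (toward the right side, say) applicable, so Alice can take that when it is heavier, pinning Bob into one sub-part of the $d_\ell$-side and producing a second inequality of shape $\Delta\le 1-2\alpha_\ell$, refined by which sub-part Bob occupies.

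Finally I would combine each inequality so obtained with \eqref{eq:aB-a1} and \eqref{eq:aB-a2}. The value \eqref{eq:alpha} of $\alpha_\ell$ is precisely the one for which a single fixed weighted average --- the same weights in every case --- collapses, after dropping nonnegative terms and applying $z_\ell\le y_\ell$ and $z_r\le y_r$, to $\sum_v w(v)=1$, giving $5\Delta\le 1$ and hence $\Delta\le\fif$. As in Lemma~\ref{lem:distances} one disposes of $\Delta<0$ at the outset and may assume $\aB(a_r,d_\ell)$ is not applicable, since otherwise Lemma~\ref{lem:distances} already finishes.

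The two delicate points should be (i) the applicability bookkeeping in the ``$d_\ell$-side'' case --- comparing distances from Bob's first vertex to $d_\ell$ and to $a_r$ with $\dist(e_\ell,d_\ell)$ so that Alice is certain to complete whichever walk she needs before Bob can intercept it --- and (ii) checking that one choice of averaging weights annihilates the error terms in all cases at once, which is exactly the constraint forcing $\alpha_\ell$ to be the affine combination of $x_\ell,y_\ell,z_\ell,r_\ell,x_r,z_r$ in \eqref{eq:alpha}. I expect (i) to carry most of the weight.
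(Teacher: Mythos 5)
There is a genuine gap, and it sits exactly where you predicted the weight would be carried. Your plan starts Alice at $e_\ell$ and cases on Bob's reply; but under the hypothesis $\dist(a_r,d_\ell)\le\dist(e_\ell,d_\ell)$ the vertex $a_r$ (and with it the whole right side) is at least as close to $d_\ell$ as $e_\ell$ is, so when Bob's first vertex lands on the $d_\ell$-side of $e_\ell$ --- or simply in the right side --- nothing forces $\dist(B(e_\ell),d_\ell)\ge\dist(e_\ell,d_\ell)$, and the ``competing avoid-Bob walk from $e_\ell$ through $d_\ell$ onward'' need not be applicable; the distance hypothesis points the wrong way for that purpose (it helps Bob reach $d_\ell$ and $a_r$, not Alice). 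In that situation Alice secures only about $\alpha_\ell$ while Bob can run through $X_\ell$, $a_\ell$, $a_r$ into the right side, and the best inequality available is of the shape $\Delta\le(y_r+z_r)-(y_\ell+z_\ell)+\alpha_\ell$; this is precisely Cases~1.2 and~2.3 in the proof of Lemma~\ref{lem:almost-last-bound}, where the paper does \emph{not} conclude $\Delta\le\fif$ but only this weaker alternative, which is later closed in the proof of Theorem~\ref{thm:main} by combining it with its right-side dual. Your fallback $\Delta\le 1-2\alpha_\ell$ is weaker still: with $x_\ell=x_r=r_\ell=r_r=0$ and $y_\ell=z_\ell=y_r=z_r=1/4$ (so $\alpha_\ell=1/6<z_\ell-x_\ell$), the three inequalities \eqref{eq:aB-a1}, \eqref{eq:aB-a2} and $\Delta\le 1-2\alpha_\ell$ only certify $\Delta\le 1/4$, so no single weighted average of them can give $\fif$. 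In short, starting at $e_\ell$ reproduces the engine of Lemma~\ref{lem:almost-last-bound}, which is provably not enough on its own to reach $\fif$ from left-side data plus \eqref{eq:aB-a1}--\eqref{eq:aB-a2}.

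The paper's proof of this lemma uses a different strategy and crucially exploits the one fact your write-up mentions but never uses: by Lemma~\ref{lem:distances} one may assume $\aB(a_r,d_\ell)$ is not applicable, i.e.\ $\dist(B(a_r),d_\ell)<\dist(a_r,d_\ell)\le\dist(e_\ell,d_\ell)$. Alice then plays plain $\aB(a_r)$: starting at $a_r$ she seals the right side and collects $x_r+y_r$, while Bob, whose forced reply $B(a_r)$ lies in the left side strictly closer to $d_\ell$ than $e_\ell$, is confined to the left side and misses at least $x_\ell+\alpha_\ell$ there. This yields the strengthened inequality $\Delta\le(y_\ell+z_\ell+r_\ell-\alpha_\ell)-(x_r+y_r)$ (the paper's \eqref{eq:new-start-position-new}), and summing $9$ times it with $8$ times \eqref{eq:aB-a2} and $13$ times \eqref{eq:aB-a1}, using the definition \eqref{eq:alpha} of $\alpha_\ell$ together with \eqref{eq:z-leq-y} and \eqref{eq:sumIsOne}, gives $30\Delta\le 6$. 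So the missing idea is not a cleverer race from $e_\ell$, but turning the non-applicability of $\aB(a_r,d_\ell)$ into positional information about $B(a_r)$ and cashing it in with the strategy $\aB(a_r)$, which you explicitly set aside.
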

\begin{proof}
 By Lemma~\ref{lem:distances} we can assume
 \[
  \dist(B(a_r),d_{\ell}) < \dist(a_r,d_{\ell}) \leq \dist(e_{\ell},d_{\ell}).
 \]
 So Bob surely misses a weight of $\alpha_{\ell}$ from the path $Q_{\ell}$.
 Even more, when Alice plays $\aB(a_r)$ and Bob answers with $B(a_r)$, then Bob misses a weight of $x_{\ell}+\alpha_{\ell}$ from the left side.
 Indeed, if Bob's final path $B_{a_r}$ is disjoint from $X_{\ell}$, then $w(B_{a_r}) \leq y_{\ell} + z_{\ell} + r_{\ell} - \alpha_{\ell}$.
 Otherwise,
 \[
  w(B_{a_r}) \leq y_{\ell} + x_{\ell} + r_{\ell} - \alpha_{\ell} \overset{\eqref{eq:z-leq-y}}{\leq} y_{\ell} + z_{\ell} + r_{\ell} - \alpha_{\ell}.
 \]
 Thus we obtain a new inequality for the strategy $\aB(a_r)$:
 \begin{equation}
  \Delta \leq (y_{\ell} + z_{\ell} + r_{\ell} - \alpha_{\ell}) - (x_r + y_r).\label{eq:new-start-position-new}
 \end{equation}

 Basically, we have subtracted the term $\alpha_{\ell} - r_{\ell}$ from the corresponding inequality~\eqref{eq:aB-a2}.
 However, since $\alpha_{\ell} - r_{\ell}$ may be negative, we do not simply replace~\eqref{eq:aB-a2} with~\eqref{eq:new-start-position-new}.
 Instead we sum $9$ times~\eqref{eq:new-start-position-new} and $8$ times~\eqref{eq:aB-a2} and $13$ times~\eqref{eq:aB-a1} and obtain
 \begin{align*}
  30\Delta &\leq 9(y_{\ell} + z_{\ell} + r_{\ell} - \alpha_{\ell}) - 9y_r + 8(y_{\ell} + z_{\ell} - y_r) + 13(y_r + z_r - y_{\ell})\\
  &\overset{\eqref{eq:alpha}}{\leq} - 2y_{\ell} + 14z_{\ell} - 4y_r + 16z_r + 6r_{\ell}\\
  &\overset{\eqref{eq:z-leq-y}}{\leq} 6y_{\ell} + 6z_{\ell} + 6y_r + 6z_r + 6r_{\ell} \overset{\eqref{eq:sumIsOne}}{\leq} 6.
 \end{align*}
 
 Here we omitted all occurrences of $x_{\ell}$ and $x_r$ since these appear with negative signs only.
 Hence $\Delta \leq \fif$ as desired.
\end{proof}

Symmetrically to $\alpha_{\ell}$ and $e_{\ell}$ we can define $\alpha_r$ and (in case $\alpha_r < z_r - x_r$) $e_r$ and conclude the ``dual version'' of Lemma~\ref{lem:distances-2}, i.e., that if $\dist(a_{\ell},d_r) \leq \dist(e_r,d_r)$, then $\Delta \leq \fif$.

\begin{lemma}\label{lem:almost-last-bound}
 We have $\Delta \leq \fif$ or $\Delta \leq (y_r + z_r) - (y_{\ell} + z_{\ell}) + \alpha_{\ell}$.
\end{lemma}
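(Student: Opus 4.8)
The plan is to dispatch two easy alternatives and then, in the single remaining case, add one new avoid Bob inequality to the pool. First, if $\alpha_\ell \geq z_\ell - x_\ell$ the second alternative follows at once: then $-(y_\ell+z_\ell)+\alpha_\ell \geq -(x_\ell+y_\ell)$, so \eqref{eq:aB-a1} already gives $\Delta \leq (y_r+z_r)-(x_\ell+y_\ell) \leq (y_r+z_r)-(y_\ell+z_\ell)+\alpha_\ell$. So assume $\alpha_\ell < z_\ell - x_\ell$, which is exactly the regime in which $e_\ell$ is defined. Applying Lemma~\ref{lem:distances} to the strategy $\aB(a_r,d_\ell)$ and then Lemma~\ref{lem:distances-2}, I may additionally assume $\dist(B(a_r),d_\ell) < \dist(a_r,d_\ell)$ and $\dist(a_r,d_\ell) > \dist(e_\ell,d_\ell)$; otherwise $\Delta \leq \fif$ and the first alternative holds.

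It remains to handle this case, and here I would bring in an avoid Bob strategy for Alice that runs through $e_\ell$. The reason $e_\ell$ was placed as close as possible to $d_\ell$ is precisely that, under the two distance inequalities above, Alice can get to $e_\ell$ before Bob can contest it and thereby secure the arm of $Q_\ell$ containing $d_\ell$, which by the defining property of $e_\ell$ has weight at least $q_\ell - \alpha_\ell = (y_\ell+z_\ell)-\alpha_\ell$, while the complementary arm of weight at least $\alpha_\ell$ on the far side of $e_\ell$ stays out of Bob's reach. Since Alice's path then also cuts the tree across the edge $a_\ell a_r$, this should yield $w(A) \geq (y_\ell+z_\ell)-\alpha_\ell$ and $w(B) \leq y_r+z_r$, and $\Delta \leq w(B)-w(A)$ is exactly the second alternative. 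In the sub-cases where Bob replies on the same side of $e_\ell$ as the light arm, Alice's guaranteed weight drops; there I would instead take a weighted average of the resulting weaker inequality with \eqref{eq:aB-a1} and \eqref{eq:aB-a2}, in the style of the proofs of Lemmas~\ref{lem:distances} and \ref{lem:distances-2}, to get back to $\Delta \leq \fif$.

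The hard part is making this last step precise: classifying, according to the position of the relevant $B(\cdot)$, which of the two alternatives applies; verifying that the new avoid Bob strategy really is applicable in the case we have reduced to (this is exactly where $\dist(a_r,d_\ell) > \dist(e_\ell,d_\ell)$ and $\dist(B(a_r),d_\ell) < \dist(a_r,d_\ell)$ enter); keeping the weight accounting along $X_\ell$, along the traversed part of $Q_\ell$, and along the remaining $d_\ell$-arm tight enough that no single-vertex slack is lost; and choosing the coefficients of the weighted average so that the bad sub-cases indeed collapse to $\fif$.
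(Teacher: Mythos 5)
Your opening reduction is fine and matches the paper: the case $\alpha_{\ell} \geq z_{\ell}-x_{\ell}$ follows from \eqref{eq:aB-a1}, and Lemma~\ref{lem:distances-2} lets you assume $\dist(e_{\ell},d_{\ell}) < \dist(a_r,d_{\ell})$. But everything after that is the actual content of the lemma, and you defer it. In the paper Alice \emph{starts at} $e_{\ell}$, and the case split is on her opponent's reply $B(e_{\ell})$: if $\dist(B(e_{\ell}),d_{\ell}) < \dist(e_{\ell},d_{\ell})$ she plays $\aB(e_{\ell})$ (here $\dist(e_{\ell},d_{\ell}) \leq \dist(a_r,d_{\ell})$ is used only to place $B(e_{\ell})$ in the left side), otherwise she plays $\aB(e_{\ell},d_{\ell})$, which is applicable by the case hypothesis --- not because of the distance facts about $B(a_r)$ you import, which do not enter this proof at all. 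Each case splits again according to whether Bob's final path meets $X_{\ell}$, crosses $a_{\ell}a_r$, or avoids the left side, and three of the resulting five sub-cases need genuinely new inequalities, namely $\Delta \leq y_{\ell}+z_{\ell}+r_{\ell}-2\alpha_{\ell}$, $\Delta \leq y_{\ell}+r_{\ell}-y_r$ and $\Delta \leq -y_{\ell}+r_{\ell}+x_r+y_r$, which are then averaged against \eqref{eq:aB-a1} and \eqref{eq:aB-a2} with coefficients $(9,4,2)$, $(3,7,5)$ and $(3,5,7)$ to reach $\Delta \leq \fif$; the other two sub-cases give the second alternative, one of them only after invoking the definition \eqref{eq:alpha} of $\alpha_{\ell}$. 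None of this case analysis, none of these inequalities, and none of the coefficients appear in your proposal; you explicitly label them ``the hard part'', so the core of the proof is missing.

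Moreover, the one quantitative claim you do make for the main case is wrong. If Alice secures the $d_{\ell}$-arm of $Q_{\ell}$ from $e_{\ell}$, her path stays inside $Q_{\ell}$ and contains neither $a_{\ell}$ nor $a_r$, so it does not ``cut the tree across the edge $a_{\ell}a_r$'', and Bob is in no way confined to the right side: the bound $w(B) \leq y_r+z_r$ holds only in the sub-case where his path happens to avoid the left side entirely (and then because $Y_r\cup Z_r$ is a heaviest right path, not because of any cut). In the other sub-cases Bob can take $R_{\ell}$, $X_{\ell}$, the far arm of $Q_{\ell}$, or cross into the right side, and these are not captured by your dichotomy ``Bob replies on the light-arm side of $e_{\ell}$''. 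For instance, when Bob replies towards $d_{\ell}$ and his path later crosses $a_{\ell}a_r$ (the paper's Case~1.2), the correct conclusion is the \emph{second} alternative, obtained from $w(B_{e_{\ell}}) \leq y_{\ell}+x_{\ell}+r_{\ell}-\alpha_{\ell}+y_r+x_r$ combined with \eqref{eq:alpha}; and when Bob's reply lies in a branch of $T-d_{\ell}$ not containing $a_r$ (Case~2.1), Alice should continue across $a_{\ell}a_r$ into the right side and one needs $\Delta \leq y_{\ell}+r_{\ell}-y_r$ plus an averaging step. So beyond being incomplete, the sketched accounting for the main case does not hold as stated.
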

\begin{proof}
 In case that $\alpha_{\ell} \geq z_{\ell}-x_{\ell}$ we immediately obtain
 \[
  \Delta \overset{\eqref{eq:aB-a1}}{\leq} (y_r + z_r) - (x_{\ell} + y_{\ell}) \overset{\alpha_{\ell} \geq z_{\ell}-x_{\ell}}{\leq} (y_r + z_r) - (y_{\ell} + z_{\ell}) + \alpha_{\ell}.
 \] 
 If $\alpha_{\ell} < z_{\ell}-x_{\ell}$ (so $e_{\ell}$ is well-defined) we consider strategies in which Alice starts with $e_{\ell}$ and distinguish the following cases for Bob's starting position $B(e_{\ell})$.
 \begin{description}
  \item[Case~1: $\dist(B(e_{\ell}),d_{\ell}) < \dist(e_{\ell},d_{\ell})$.]{\ \\}
   As $\dist(e_{\ell},d_{\ell}) > 0$, we have $e_{\ell} \neq d_{\ell}$.
   Consider the strategy~$\aB(e_{\ell})$.
   As $B(e_{\ell})$ is closer to $d_{\ell}$ than $e_{\ell}$, Bob's starting position lies in the subtree of $T-e_{\ell}$ containing $d_{\ell}$.
   By the definition of $e_{\ell}$ Alice gets at least a weight of $\alpha_{\ell}$ from $Q_{\ell}$.
   Moreover, $\dist(B(e_{\ell}),d_{\ell}) < \dist(e_{\ell},d_{\ell}) \leq \dist(a_r,d_{\ell})$ by Lemma~\ref{lem:distances-2}, and hence Bob starts his path $B_{e_{\ell}}$ in the left side.
      
   If (\textbf{Case~1.1}) $a_r \notin B_{e_{\ell}}$, i.e., Bob's path is contained in the left side, then (like in the previous proof) Bob misses a weight of $x_{\ell}+\alpha_{\ell}$ from the left side.
   Indeed, if $B_{a_r} \cap X_{\ell} = \emptyset$, then $w(B_{e_{\ell}}) \leq y_{\ell} + z_{\ell} + r_{\ell} - \alpha_{\ell}$.
   Otherwise, $w(B_{e_{\ell}}) \leq y_{\ell} + x_{\ell} + r_{\ell} \leq y_{\ell} + z_{\ell} + r_{\ell} - \alpha_{\ell}$, where the last inequality uses the assumption $\alpha_{\ell} \leq z_{\ell} - x_{\ell}$.
   
   Hence we get
   \begin{equation}
    \Delta \leq (y_{\ell} + z_{\ell} + r_{\ell} - \alpha_{\ell}) - \alpha_{\ell} = y_{\ell} + z_{\ell} + r_{\ell} - 2\alpha_{\ell}.\label{eq:case-1-1}
   \end{equation}
   
   Summing $9$ times~\eqref{eq:case-1-1} and $4$ times~\eqref{eq:aB-a1} and $2$ times~\eqref{eq:aB-a2} gives:
   \begin{align*}
    15\Delta &\leq 9(y_{\ell} + z_{\ell} + r_{\ell}) - 18\alpha_{\ell} + 2(z_r + z_{\ell}) + 2(y_{\ell} + z_{\ell}) - 2y_r \\
    &= 11y_{\ell} + 13z_{\ell} + 9r_{\ell} - 6(r_{\ell} + 2y_{\ell} + z_{\ell} + x_{\ell} + x_r - z_r) - 2y_r + 2z_r \\
    &\leq  y_{\ell} + 7z_{\ell} + 3r_{\ell} - 2y_r + 8z_r\\
    &\overset{\eqref{eq:z-leq-y}}{\leq} 3y_{\ell} + 3z_{\ell} + 3r_{\ell} + 3y_r + 3z_r \overset{\eqref{eq:sumIsOne}}{\leq} 3,
   \end{align*}
   which implies $\Delta \leq \fif$.
            
   If (\textbf{Case~1.2}) $a_r \in B_{e_{\ell}}$, i.e., Bob's path enters the right side, then $w(B_{e_{\ell}}) \leq y_{\ell} + x_{\ell} + r_{\ell} - \alpha_{\ell} + y_r + x_r$ and we conclude
   \[
    \Delta \leq (y_{\ell} + x_{\ell} + r_{\ell} - \alpha_{\ell} + y_r + x_r) - \alpha_{\ell} \overset{\eqref{eq:alpha}}{\leq} (y_r + z_r) - (z_{\ell} + y_{\ell}) + \alpha_{\ell}.
   \]
      
  \item[Case~2: $\dist(B(e_{\ell}),d_{\ell}) \geq \dist(e_{\ell},d_{\ell})$.]{\ \\}
   We consider the strategy~$\aB(e_{\ell},d_{\ell})$, because it is applicable.
   We further distinguish the possible locations of Bob's starting position $B(e_{\ell})$.
      
   If (\textbf{Case~2.1}) $B(e_{\ell})$ is \emph{not} in the subtree of $T - d_{\ell}$ containing $a_{r}$, then, after reaching $d_{\ell}$, Alice can continue her path into the right side.
   In total she gets at least $z_{\ell} - \alpha_{\ell} + x_{\ell} + x_r + y_r$, while Bob gets at most $y_{\ell} + r_{\ell}$.
   We obtain
   \begin{equation}
    \Delta \leq (y_{\ell} + r_{\ell}) - (z_{\ell} - \alpha_{\ell} + x_{\ell} + x_r + y_r) \overset{\alpha_{\ell} \leq z_{\ell} - x_{\ell}}{\leq} y_{\ell} + r_{\ell} - y_r.\label{eq:case-2-1}
   \end{equation}
   
   Summing $3$ times~\eqref{eq:case-2-1} and $7$ times~\eqref{eq:aB-a1} and $5$ times~\eqref{eq:aB-a2} yields
   \begin{align*}
    15\Delta  &\leq 3(y_{\ell} + r_{\ell} - y_r) + 5(z_r + z_{\ell}) + 2(y_r + z_r) - 2y_{\ell} \\
    &= y_{\ell} + 5z_{\ell} + 3r_{\ell} - y_r + 7z_r \\
    &\overset{\eqref{eq:z-leq-y}}{\leq} 3y_{\ell} + 3z_{\ell} + 3r_{\ell} + 3y_r + 3z_r \overset{\eqref{eq:sumIsOne}}{\leq} 3.
   \end{align*}
   Thus $\Delta \leq \fif$ and we are done.

   So we may assume that $B(e_{\ell})$ is in the subtree of $T - d_{\ell}$ containing $a_{r}$.
   Hence Alice can continue her path on the left, giving $w(A_{e_{\ell}}) \geq y_{\ell} + z_{\ell} - \alpha_{\ell}$.
   In this case Bob can not claim any vertex from $Q_{\ell}$.
   
   If (\textbf{Case~2.2}) $B_{e_{\ell}}$ contains vertices from the left side of $T$, then $w(B_{e_{\ell}}) \leq x_{\ell} + r_{\ell} + x_r + y_r$ and hence
   \begin{equation}
    \Delta \leq (x_{\ell} + r_{\ell} + x_r + y_r) - (y_{\ell} + z_{\ell} - \alpha_{\ell}) \overset{\alpha_{\ell} \leq z_{\ell} - x_{\ell}}{\leq} - y_{\ell} + r_{\ell} + x_r + y_r.\label{eq:case-2-2}
   \end{equation}
      
   Similar like above we obtain $\Delta \leq \fif$ by summing $3$ times~\eqref{eq:case-2-2}, $5$ times~\eqref{eq:aB-a1} and $7$ times~\eqref{eq:aB-a2} as follows
   \[
   15\Delta  \leq - y_{\ell} + 7z_{\ell} + 3r_{\ell} + y_r + 5z_r \overset{\eqref{eq:z-leq-y}}{\leq} 3y_{\ell} + 3z_{\ell} + 3r_{\ell} + 3y_r + 3z_r \overset{\eqref{eq:sumIsOne}}{\leq} 3.
   \]

   Finally, if (\textbf{Case~2.3}) $B_{e_{\ell}}$ has no vertex from the left side of $T$, then $w(B_{e_{\ell}}) \leq y_r + z_r$ (a heaviest path in the right side), and we obtain
   \begin{equation*}
    \Delta \leq (y_r + z_r) - (y_{\ell} + z_{\ell} - \alpha_{\ell}),
   \end{equation*}
   as desired.\qedhere
 \end{description}
\end{proof}

We are now ready to prove our main theorem, namely that $\Delta(T,w) \leq \fif$ for any instance $(T,w)$ of \texttt{Tron}.

\begin{proof}[Proof of Theorem~\ref{thm:main}]
 By symmetry we also have the ``dual versions'' of the preceding lemmas and hence also the ``dual version'' of Lemma~\ref{lem:almost-last-bound} where all occurrences of $\ell$ and $r$ in the subscripts are switched.
 So with Lemma~\ref{lem:almost-last-bound} we can conclude that $\Delta \leq \fif$ or
 \begin{align*}
  \Delta &\leq (y_r + z_r) - (y_{\ell} + z_{\ell}) + \alpha_{\ell}\\
  \text{and } \quad \Delta &\leq (y_{\ell} + z_{\ell}) - (y_r + z_r) + \alpha_r.
 \end{align*}
 Summing $3$ times these two inequalities yields
 \begin{equation}
  6\Delta \leq 3\alpha_{\ell} + 3\alpha_r = (r_{\ell} + 2y_{\ell} + z_{\ell} + x_{\ell} + x_r - z_r) + (r_r + 2y_r + z_r + x_r + x_{\ell} - z_{\ell}),\label{eq:last-inequality} 
 \end{equation}
 where we plugged in the definition of $\alpha_{\ell}$ and $\alpha_r$.
 Taking~\eqref{eq:last-inequality} and adding $2$ times inequalities~\eqref{eq:aB-a1} and~\eqref{eq:aB-a2} we finally obtain
 \begin{align*}
  10\Delta &\leq 3\alpha_{\ell} + 3\alpha_r + 2z_{\ell} + 2z_r \\
   &= r_{\ell} + 2y_{\ell} + 2z_{\ell} + 2x_{\ell} + r_r + 2y_r + 2z_r + 2x_r\overset{\eqref{eq:sumIsOne}}{\leq} 2,
 \end{align*}
 which implies $\Delta \leq \fif$, as desired.
\end{proof}

\section{Open Problems}\label{sec:conclusions}

In this paper we proved that playing \texttt{Tron} on weighted trees, Alice can guarantee to get at most $\fif$ less than Bob.
And this is tight, as verified by the instance in Figure~\ref{fig:weighted-tree}.
However, it remains open to determine the worst-case instance for \emph{unweighted} trees, i.e., where all vertex weights equal $1/n$, when $n$ is the number of vertices in the tree.
Playing such instances, Bob can always guarantee to get at most one vertex less than Alice by considering a longest path through Alice's starting position and playing next to it in the longer half.
The worst example (in terms of Alice) that we know of is shown in Figure~\ref{fig:unweighted-tree}, where Bob can guarantee to get $1/10$ of the total weight more than Alice.
Indeed, there are $40$ vertices and Bob can claim $4$ vertices more than Alice as follows:
Alice starts with some vertex $w$, say in the right side.
If $w \in \{u,v\}$, then Bob answers with $x$, while if $w \neq u,v$, then Bob answers with the vertex adjacent to $w$ in the subtree containing $u,v$.
Afterwards, Bob always continues in the direction containing a longest possible extension of his path.

\begin{figure}[htb]
 \centering
 \subfigure[\label{fig:unweighted-tree}]{
 \includegraphics{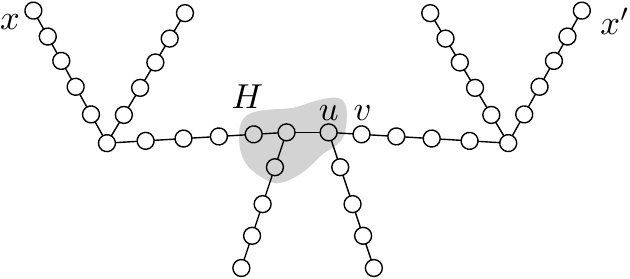}
 }
 \hspace{5em}
 \subfigure[\label{fig:weighted-cycle}]{
  \includegraphics{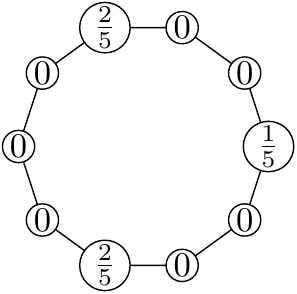}
 }
 \caption{\subref{fig:unweighted-tree} An instance of \texttt{Tron} with an unweighted tree and value $\Delta = 1/10$.
  \subref{fig:weighted-cycle} An instance of \texttt{Tron} with a weighted cycle and value $\Delta = \fif$.}
\end{figure}

On the other hand, Alice can start with $v$ and if Bob answers with one of the four vertices in $H$, then Alice can claim the path ending at $x'$. If he answers in the right side but not with $u$, she can claim the path ending at $x$.
In all other cases, it is enough for Alice to move towards Bob as long as possible, and continue with a longest path afterwards.
Using these strategies one can check that the tree in Figure~\ref{fig:unweighted-tree} has value $1/10$ and we conjecture that this is the worst case for unweighted trees.

\begin{conjecture}
 Any unweighted instance of \texttt{Tron} on trees has value at most $1/10$.
\end{conjecture}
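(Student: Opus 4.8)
\medskip

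\noindent\textbf{A proof strategy for the conjecture.}
The plan is to run the same machinery as in the proof of Theorem~\ref{thm:main}, but to exploit the two special features of an unweighted instance on $n$ vertices: every path weight is a multiple of $1/n$, and the combinatorial distances appearing in the applicability conditions are, after dividing by $n$, weights of honest subpaths and can therefore be fed into the same linear combinations as the weight inequalities. I would keep verbatim the partition of Subsection~\ref{sub:partition}: the separating edge $a_\ell a_r$, the longest left-side path $P_\ell = X_\ell \cup Y_\ell$ from $a_\ell$, the longest left-side path $Q_\ell = Y_\ell \cup Z_\ell$ through $d_\ell$, the remainder $R_\ell$, and their mirror images on the right, with $x_\ell = |X_\ell|/n$ and so on, so that \eqref{eq:sumIsOne} and \eqref{eq:z-leq-y} still hold. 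The goal is again to produce, for a suitable set of starting vertices for Alice, a nonnegative combination of avoid-Bob inequalities that certifies $\Delta(T,w) \le 1/10$.

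First I would re-derive the basic inequalities. Playing $\aB(a_\ell)$, Alice now claims \emph{exactly} the $n(x_\ell+y_\ell)$ vertices of $P_\ell$ and Bob is confined to the right side, so \eqref{eq:aB-a1} and \eqref{eq:aB-a2} survive; the new ingredient is that Bob starts at the \emph{fixed} vertex $B(a_\ell)$, so his path is also bounded by the longest right-side path \emph{through} $B(a_\ell)$, which in the unweighted setting is a genuine extra constraint rather than a triviality. I would introduce a parameter for each such ``longest path through a forced vertex'' weight and record the resulting inequalities, and likewise for the $e_\ell$-type strategies inside Lemma~\ref{lem:almost-last-bound}.

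Second --- and this is where the factor of two should come from --- I would upgrade the distance hypotheses of Lemmas~\ref{lem:distances}, \ref{lem:distances-2} and \ref{lem:almost-last-bound} from yes/no switches to quantitative inequalities. Whenever such a hypothesis fails, its failure now reads, after dividing by $n$, as an inequality between weights of subpaths of $T$; added to the pool, it strengthens each of the auxiliary bounds \eqref{eq:case-1-1}, \eqref{eq:case-2-1}, \eqref{eq:case-2-2} (and their analogues) by a term of order $\dist(\cdot,\cdot)/n$. One then searches --- by hand, guided by the extremal instance of Figure~\ref{fig:unweighted-tree}, which should be the equality case, or equivalently by solving the corresponding small linear program --- for nonnegative multipliers of all these inequalities together with \eqref{eq:sumIsOne} and \eqref{eq:z-leq-y} that yield $10\,\Delta \le 1$, with optimum exactly $1/10$ matching the lower bound.

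The main obstacle is making the distance gains robust. Bob's first move $B(u)$ is adversarial, so ``the longest right-side path through $B(u)$'' may still coincide with the global longest right-side path, and a single well-placed $B(u)$ can annihilate one of the new distance terms; to prevent this one must combine enough starting vertices for Alice that Bob cannot spoil all of them at once --- precisely the delicate interplay that the $40$-vertex example in Figure~\ref{fig:unweighted-tree} is built around. I expect one needs finitely many further avoid-Bob strategies, starting at additional distinguished vertices of $Q_\ell$, $Q_r$, $X_\ell$ and $X_r$ chosen at carefully tuned weight-fractions in the spirit of $e_\ell$, and that verifying feasibility of the resulting linear program is the real content of the proof. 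A failure at this last step would either signal that still more strategies are required or, given the matching construction in Figure~\ref{fig:unweighted-tree}, point to a subtler argument (e.g.\ an inductive or potential-function reduction on the tree structure near $a_\ell a_r$) being needed to close the last gap.
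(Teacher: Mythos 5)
You should first note that the paper does not prove this statement at all: it is posed as an open conjecture in Section~\ref{sec:conclusions}, supported only by the $40$-vertex example of Figure~\ref{fig:unweighted-tree}, which is a \emph{lower} bound construction showing the value $1/10$ is attained. So there is no proof in the paper to compare against, and any correct argument here would be new mathematics, not a reconstruction.

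Your text, however, is a research programme rather than a proof, and the gap is exactly where you place it yourself. The two load-bearing steps are only asserted: (a) that the failed applicability conditions of Lemmas~\ref{lem:distances}--\ref{lem:almost-last-bound} can be upgraded to quantitative weight inequalities that genuinely strengthen \eqref{eq:case-1-1}, \eqref{eq:case-2-1}, \eqref{eq:case-2-2}; and (b) that some nonnegative combination of the enlarged system yields $10\Delta \leq 1$. Neither is carried out, and both are problematic. For (a), $\dist(u,v)/n$ is the weight of \emph{some} path, but not of one of the pieces $X,Y,Z,R$ of the decomposition, so it does not slot into the existing linear algebra without new bookkeeping; worse, Bob's reply $B(u)$ is adversarial, so ``the longest right-side path through $B(u)$'' can coincide with the global one and the hoped-for gain vanishes --- you flag this as ``the main obstacle'' but offer no mechanism to overcome it, only the hope that finitely many additional starting vertices suffice. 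For (b), the multipliers are never exhibited, and you explicitly allow that the resulting linear program may be infeasible, in which case ``a subtler argument'' is needed. A further structural worry: the proof of Theorem~\ref{thm:main} is tight at $\fif$ on the weighted example of Figure~\ref{fig:weighted-tree}, so any improvement to $1/10$ must use unweightedness in an essential, quantitative way at every stage (parity of path lengths, the fact that Bob can always get within one vertex of Alice, etc.); saying that distances ``can be fed into the same linear combinations'' does not yet isolate where the factor $2$ actually comes from. As it stands, the proposal identifies a plausible line of attack but proves nothing beyond what Theorem~\ref{thm:main} already gives.
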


It is also interesting to determine the maximum value of \texttt{Tron} instances on other classes of graphs, for example, on weighted cycles.
Oddly enough, the worst example of a weighted cycle that we know of (shown in Figure~\ref{fig:weighted-cycle}) has a value of $\fif$, just like the worst tree.

\begin{conjecture}
 Any instance of \texttt{Tron} on a cycle has value at most $\fif$.
\end{conjecture}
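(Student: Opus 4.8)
The plan is to transfer the ``average over a family of Alice strategies'' technique used for Theorem~\ref{thm:main} to the cyclic setting, exploiting the fact that the game on a cycle $C$ is combinatorially very transparent. Once Alice picks a start vertex $u$, Bob picks his optimal start $b=B(u)$, and the cycle is cut into two arcs $P,Q$ between $u$ and $b$; then each player picks a direction, Alice first, and from then on play is forced. Only two qualitatively different things can happen for a given direction pair: either the two paths run \emph{away} from each other and partition all of $C$, so that one player gets $w(u)+w(P\setminus\{b\})$ and the other gets $w(b)+w(Q\setminus\{u\})$; or the two paths run \emph{toward} each other inside one arc, split it, and leave the other arc entirely unclaimed. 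In the second case the split point is determined by the \emph{number} of vertices in the contested arc (Alice, moving first, gets the first $\lceil k/2\rceil$ of its $k$ interior vertices), not by their weights. So $\Delta=\min_u\min_{\delta_A}\max_{\delta_B}(\cdots)$ over these finitely many cases, and the whole problem reduces to a statement about arc weights.

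Next I would locate a good starting vertex by a pigeonhole / discrete intermediate value argument analogous to the one producing the edge $a_\ell a_r$ in Subsection~\ref{sub:partition}: as $u$ runs once around $C$, the response $B(u)$ also moves around, so there is a vertex $u^*$ (or an adjacent pair) for which the two arcs $P,Q$ cut off by $u^*$ and $b^*=B(u^*)$ are ``weight-balanced'' in a useful sense, controlling both $|w(P)-w(Q)|$ and the weight Bob could grab by choosing the split-and-waste option. Fixing such a configuration, I would introduce avoid-Bob-style strategies for Alice: one where she walks into the lighter arc toward $b^*$ (partitioning $C$ and capping Bob's partition payoff); one where she wins the race to split the heavier arc; and, crucially, one where she does \emph{not} start at $u^*$ but at a vertex $e$ chosen so that the contested arc splits in her favor by weight — the cyclic analogue of the $e_\ell$ trick of Lemma~\ref{lem:distances-2} and Lemma~\ref{lem:almost-last-bound}, used under the same applicability condition $\dist(u,v)\le\dist(B(u),v)$. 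Each such strategy yields an inequality bounding $\Delta$ by a linear form in the arc weights, after which one hunts for nonnegative multipliers that, together with $w(C)=1$ and the balance inequalities at the pivot, collapse everything to $5\Delta\le 1$ — exactly as in the ``$9\times\eqref{eq:new-start-position-new}+8\times\eqref{eq:aB-a2}+13\times\eqref{eq:aB-a1}$'' style manoeuvres above.

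I expect two obstacles. The main one is the count-versus-weight gap in a contested arc: in the tree proof the second phase of an avoid-Bob strategy is always a \emph{heaviest} path, a purely weight-based object, whereas on a cycle the meeting point inside a contested arc is fixed by vertex counts, so Alice cannot simply guarantee ``half the weight'' of that arc — a heavy vertex sitting just past the split point is lost. Coping with this forces Alice's choice of start vertex to do double duty (shaping $B(u)$ \emph{and} positioning the eventual split), and it is not obvious the two demands are always jointly satisfiable; closing this gap is where the argument is most likely to need a genuinely new idea, possibly preceded by a reduction showing that a worst-case cycle has only few ``heavy'' vertices. The second obstacle is merely the search for multipliers that survive every sub-case of Bob's direction and location, which — as in the proofs above — is a finite, if tedious, linear-programming-style verification once the strategy family is fixed. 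Throughout, the cycle of Figure~\ref{fig:weighted-cycle}, which already attains $\Delta=\fif$, should be kept in hand as the tightness certificate indicating which strategies the family must contain.
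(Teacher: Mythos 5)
You have not given a proof here, and indeed you could not have compared it to one: this statement is one of the two \emph{open problems} of Section~\ref{sec:conclusions}; the paper offers no proof, only the example of Figure~\ref{fig:weighted-cycle} showing that $\fif$ would be tight. Your text is a programme for transferring the avoid-Bob/weighted-averaging machinery to cycles, and you yourself flag that its decisive step ``is where the argument is most likely to need a genuinely new idea.'' As it stands there is therefore a genuine gap, not a proof.

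Concretely, three things are missing. First, the count-versus-weight problem you identify is not a technical nuisance but the heart of the matter: on a cycle, once both start vertices and both directions are fixed, play is forced, and the split of a contested arc is governed by vertex counts and parity, not by weights; moreover Bob chooses his start vertex \emph{and} his direction after seeing Alice's corresponding choices, so he largely controls which parity class of the contested arc Alice can reach. Every inequality in the tree proof (\eqref{eq:aB-a1}, \eqref{eq:aB-a2}, \eqref{eq:new-start-position-new}, \eqref{eq:case-1-1}, \dots) ultimately rests on the heaviest-path phase of an avoid-Bob strategy, i.e.\ on Lemma~\ref{lem:heavy-path} and the quantities $x,y,z,r$; none of these has an obvious cycle analogue, and a single heavy vertex sitting one step beyond the forced meeting point defeats any purely weight-based bound unless Alice can be shown to win that parity race, which you do not show. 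Second, your pivot $u^*$ is asserted by a ``pigeonhole / discrete intermediate value argument,'' but the tree argument for the edge $a_{\ell}a_r$ uses $|E|<|V|$, which fails on a cycle, and no continuity or monotonicity of $u\mapsto B(u)$ is available to substitute for it; the existence of a weight-balanced configuration is unproved. Third, without an explicit finite family of strategies and explicit multipliers, the final ``collapse everything to $5\Delta\le 1$'' step cannot even be checked. So the plan is a sensible mirror of the proof of Theorem~\ref{thm:main}, but the conjecture remains open both in the paper and after your proposal.
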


\bibliographystyle{elsarticle-num}
\bibliography{lit}

\end{document}